\documentclass[a4paper,12pt,reqno]{amsart}

 \usepackage[T2A]{fontenc}
 \usepackage[utf8]{inputenc}
 \usepackage[ukrainian,english]{babel}
 \usepackage{amssymb,upref}
\usepackage{amsthm}

\usepackage{graphicx}
\usepackage{mathrsfs}

\usepackage[text={130mm,200mm}]{geometry}

\theoremstyle{plain}
\newtheorem{theorem}{Theorem}
\newtheorem*{theorem*}{Theorem}
\newtheorem{corollary}{Corollary}
\newtheorem*{corollary*}{Corollary}
\newtheorem{lemma}{Lemma}
\newtheorem*{lemma*}{Lemma}

\newtheorem*{proposition*}{Proposition}

\newtheorem*{conjecture*}{Conjecture}
\theoremstyle{definition}
\newtheorem{definition}{Definition}
\newtheorem*{definition*}{Definition}
\theoremstyle{remark}
\newtheorem{remark}{Remark}
\newtheorem*{remark*}{Remark}

\begin{document}

\title[One continuum class of fractal functions]{One continuum class of fractal functions defined in terms of $Q^*_s$-representation}

\author{V. V. Nazarchuk}
\address[V. V. Nazarchuk]{Institute of Mathematics of NAS of Ukraine, Kyiv, Ukraine\\
ORCID 0009-0009-7584-6370}
\email{ nazarchukvalentyna@imath.kiev.ua}

\author{S. O. Vaskevych}
\address[S. O. Vaskevych]{Institute of Mathematics of NAS of Ukraine, Kyiv, Ukraine\\
ORCID 0009-0005-3979-4543}
\email{svetaklymchuk@imath.kiev.ua}

\author{S. P. Ratushniak}
\address[S. P. Ratushniak]{Institute of Mathematics of NAS of Ukraine,  Dragomanov Ukrainian State University, Kyiv, Ukraine\\
ORCID 0009-0005-2849-6233}
\email{ratush404@gmail.com}

\subjclass{26A21, 26A30}

\keywords{$Q^{*}_s$-representation of numbers, fractal functions, digit projectors, fractal sets, Cantor-type sets, digit inverter}

\thanks{Bukovinian Math. Journal. 2024, 12, 2, 154–161}
\thanks{This work was supported by a grant from the Simons Foundation (1030291, V.N.)}

\begin{abstract}
In the paper we study a class $F$ of multiparameter functions defined 
in terms of a polybasic $s$-adic $Q^{*}_{s}$-representation of numbers by
\begin{equation*}
f_a\bigl(x=\Delta^{Q^{*}_s}_{\alpha_1\alpha_2\ldots\alpha_n\ldots}\bigr)
=
\Delta^{Q^{*}s}_{|a_1-\alpha_1|\,|a_2-\alpha_2|\,\ldots\,|a_n-\alpha_n|\ldots},
\end{equation*}
where $(a_n)$ is the sequence of digits for $s$-adic representation of the parameter 
$a\in[0,1]$, and
\begin{equation*}
\Delta^{Q^{*}_s}_{\alpha_1\alpha_2\ldots\alpha_n\ldots}=
\beta_{\alpha_1 1}+
\sum_{n=2}^{\infty} \left(
\beta_{\alpha_n n} \prod_{j=1}^{n-1} q_{\alpha_j j}
\right)
\end{equation*}
is the $Q^{*}_{s}$-representation of real numbers generated by a positive stochastic 
matrix $\|q_{ij}\|$ with $\beta_{\alpha_n n}=\sum\limits_{i=0}^{\alpha_n-1} q_{in}$.
For a fixed $Q^{*}_{s}$-representation of numbers the function $f_a$ is determined by 
the parameter $a$, which makes the class of functions continuum. 
In this paper we investigate the continuity of the function $f_a$ on the sets of $Q^{*}_{s}$-binary and $Q^{*}_{s}$-unary numbers. We prove that the functions in this class are continuous on the set of numbers with a unique $Q^{*}_{s}$-representation. Furthermore, we show that except for $f_0$ and $f_1$, all functions have a countable set of discontinuities at $Q^{*}_{s}$-binary points. 
We classify the topological types of the value sets of $f_a$ depending on the parameter $a$. We prove that, if the value set is of Cantor type, then it is zero-dimensional. These properties reveal the fractal nature of functions in the class $F$. 
We describe the structural properties of the level sets of $f_a$ in terms of the digits of the $s$-adic representation of $a$. In particular, we establish that a level set of the function $f_a$ can be an empty set, a finite set, or a continuum.  For certain values of $s$ we provide examples of fractal level sets and calculate its fractal dimensions.
\end{abstract}

\maketitle
\section*{Introduction}

We call a function fractal if its range, graph [2, 3, 7], level sets [4], 
or other sets associated with the function possess fractal properties [1, 5]. 
Among functions defined on the interval $[0,1]$ that exhibit complicated local
structure and fractal properties, particular attention deserves the class of
functions that have a countable set of discontinuities and are continuous at
all remaining points. This paper studies functions of this type.
To define and study these functions analytically, we use the polybasic
representation of real numbers (the $Q^{*}_{s}$-representation), which depends
on infinitely many parameters and generalizes the classical $s$-adic
representation.
Throughout the paper we fix the $Q^{*}_{s}$-representation and consider a
one-parameter class of functions with parameter
$a \in [0,1]$.
In the paper we investigate topological and metric properties of the range
and the level sets of functions from this class.
It is worth noting that this class contains two continuous functions: the
identity mapping of the unit interval and the digit inversion of the
$Q^{*}_{s}$-representation of a number.

\section{Polybasic $Q^{*}_{s}$-representation of real numbers}

Let $A\equiv \{0,1,\ldots ,s-1\}$ be an $s$-adic alphabet and let $L=A\times A\times \ldots$ denote the space of infinite sequences whose elements belong to the alphabet $A$.
Let $\|q_{in}\|$ be a stochastic matrix such that $0<q_{in}<1$ and $q_{0n}+q_{1n}+\ldots+q_{s-1n}=1$ for $n\in N$ and
\begin{equation}
\prod_{n=1}^{\infty}\max_{i\in A}\{q_{in}\}=0.
\end{equation}

Then  [6]  for any $x \in [0,1]$, there exists a sequence 
$(\alpha_n) \in L$ such that
\begin{equation}
x=\beta_{\alpha_1 1}
+\sum_{n=2}^{\infty}
\left(
\beta_{\alpha_n n}
\prod_{j=1}^{n-1} q_{\alpha_j j}
\right)
\equiv
\Delta^{Q^{*}_s}_{\alpha_1\alpha_2\ldots\alpha_n\ldots},
\end{equation}
where $\beta_{\alpha_n n}=\sum\limits_{i=0}^{\alpha_n-1} q_{in}$ (i.e. $\beta_{0n}=0$, $\beta_{1n}=q_{0n}$, $\beta_{2n}=q_{0n}+q_{1n}$, $\dots$, $\beta_{s-1,n}=1-q_{s-1n}$).
Expansion (2) defines the $Q^{*}_{s}$-expansion of the number $x$ and the abbreviated notation $\Delta^{Q^{*}_s}_{\alpha_1\alpha_2\ldots\alpha_n\ldots}$
denotes the $Q^{*}_{s}$-representation of $x$.

If $q_{in}=q_i$ for all $n\in N$ and $i\in A$, then the $Q^{*}_{s}$-representation becomes the self-similar $Q_{s}$-representation and if $q_i=\frac{1}{s}$ for $i\in A$, then the $Q^{*}_{s}$-representation coincides with the classical $s$-adic representation.

There exist numbers that have two different $Q^{*}_{s}$-representations. 
These are the numbers with representations
\begin{equation}
\Delta^{Q^{*}_s}_{\alpha_1\alpha_2\ldots\alpha_{m-1}\alpha_m(0)}
=
\Delta^{Q^{*}_s}_{\alpha_1\alpha_2\ldots\alpha_{m-1}[\alpha_m-1](s-1)}.
\end{equation}
Such numbers are called \emph{$Q^{*}_{s}$-binary numbers}. The set of these numbers is countable.
All remaining numbers of the unit interval have a unique representation and are called \emph{$Q^{*}_{s}$-unary numbers}.

\begin{definition} 
The \emph{cylinder of rank $m$ with base $c_1c_2\dots c_m$} is the set
\[
\Delta^{Q^{*}_s}_{c_1c_2\dots c_m} =
\Biggl\{
x : x = 
\sum_{k=1}^{m} \beta_{c_k k} \prod_{j=1}^{k-1} q_{c_j j}
+
\prod_{i=1}^{m} q_{c_i i} \cdot \sum_{n=m+1}^{\infty} \beta_{\alpha_n n} \prod_{j=m+1}^{n-1} q_{\alpha_j j}
\Biggr\}.
\]
\end{definition}
Cylinders of $Q^{*}_{s}$-representation satisfy the following properties for all 
$(c_1,\dots,c_m)$ and all $m\in N$:
\begin{enumerate}
\item $\Delta^{Q^{*}_s}_{c_1c_2\dots c_m} =
\bigcup\limits_{i=0}^{s-1} \Delta^{Q^{*}_s}_{c_1c_2\dots c_m i}$;

\item $\Delta^{Q^{*}_s}_{c_1c_2\dots c_m} = [a,b], \quad
a = \sum\limits_{i=1}^{m} \beta_{c_i i} \prod\limits_{j=1}^{i-1} q_{c_j j}, \quad
b = a + \prod\limits_{i=1}^{m} q_{c_i i}$;

\item $\bigcap\limits_{m=1}^{\infty} \Delta^{Q^{*}_s}_{c_1c_2\dots c_m} 
= \Delta^{Q^{*}_s}_{c_1c_2\dots c_m\dots} = x$;

\item $|\Delta^{Q^{*}_s}_{c_1c_2\dots c_m}| = \prod\limits_{i=1}^{m} q_{c_i i} = q_{c_m m} |\Delta^{Q^{*}_s}_{c_1c_2\dots c_{m-1}}|, 
\quad |\Delta^{Q^{*}_s}_{c_1}| = q_{c_1 1}|[0,1]|$.
\end{enumerate}

\begin{remark}
Now we consider a $Q^{*}_{s}$-representation that satisfies the conditions
\[
0 < \varepsilon < \min_i \{q_{in}\} \quad \text{and} \quad 
\max_i \{q_{in}\} < \delta < 1, \quad \forall n \in \mathbb{N},
\]
which guarantee that the corresponding Cantor-type sets 
\[
C[Q^{*}_s, V_n] = \{x : x = \Delta^{Q^{*}_s}_{\alpha_1\alpha_2\ldots\alpha_n\ldots}, \ \alpha_n \in V_n \subset A\},
\]
are zero-dimensional when $V_n \neq A$ infinitely many times [6].
\end{remark}

\section{Central object of study}

Let $a$ be a fixed number in the unit interval $[0,1]$ with $s$-adic representation
\[
a = \Delta^s_{a_1 a_2 \ldots a_n \ldots} 
= \frac{a_1}{s} + \frac{a_2}{s^2} + \dots + \frac{a_n}{s^n} + \dots, \quad \text{  where } (a_n) \in L.
\]
The main object of our study is the function $f_a$ defined on $[0,1]$ by equality
\begin{equation}
f_a\bigl(x = \Delta^{Q^{*}_s}_{\alpha_1 \alpha_2 \ldots \alpha_n \ldots} \bigr)
=
\Delta^{Q^{*}_s}_{|a_1 - \alpha_1| \, |a_2 - \alpha_2| \ldots |a_n - \alpha_n| \ldots}
\end{equation}
Clearly, $a$ is one of the parameters that determines the function $f_a$.
We denote the class of such functions by $F$.

The simplest representatives of the class $F$ are the functions
\begin{equation}
f_a\!\left(x=\Delta^{Q^{*}_s}_{\alpha_1\alpha_2\alpha_3\ldots\alpha_n\ldots}\right)
=
\Delta^{Q^{*}_s}_{[s-1-\alpha_1][s-1-\alpha_2][s-1-\alpha_3]\ldots[s-1-\alpha_n]\ldots},\quad a=\Delta^{Q^*_s}_{(s-1)}.
\end{equation}

The function defined by equality~(5) is called the \emph{digit inversor of the
$Q^{*}_{s}$-representation}. It is a continuous, strictly decreasing, singular
function whenever $q_{in} \ne q_{[s-1-i]n}$ for all $ n\in N$ and $i\in A$.
\begin{equation}
f_a\!\left(x=\Delta^{Q^{*}_{s}}_{\alpha_1\alpha_2\alpha_3\ldots\alpha_{2k-1}\alpha_{2k}\ldots}\right)
=
\Delta^{Q^{*}_{s}}_{
[s-1-\alpha_1]\alpha_2[s-1-\alpha_3]\ldots[s-1-\alpha_{2k-1}]\alpha_{2k}\ldots}, \quad
a=\Delta^{Q^*_s}_{([s-1]0)}.
\end{equation}

Since the inequality
\[
f_a(\Delta^{Q^{*}_{s}}_{\alpha_1 ... \alpha_n(0)})
=
\Delta^{Q^{*}_{s}}_{|a_1-\alpha_1|...|a_n-\alpha_n|a_{n+1}...}
\neq
f_a(\Delta^{Q^{*}_{s}}_{\alpha_1...[\alpha_n-1](s-1)})
=
\Delta^{Q^{*}_{s}}_{|a_1-\alpha_1|...|a_n-\alpha_n+1||a_{n+1}-s+1|...}
\]
holds, the definition of most functions in the class $F$ requires a
convention that uses only one of the two $Q^{*}_{s}$-representations of
$Q^{*}_{s}$-binary numbers. We adopt the representation that contains period $(0)$.

Note that the values of the function $f_a$ corresponding to two different
representations of the same number coincide only in the case when
$a_{n+i}=s-1-|a_{n+i}-s+1|$ for each $i\in N\cup\{0\}$, that is, when
$a_{n+i}=s-1$ or  $a_{n+i}=0$ for $n\in N$.

\begin{lemma}
For numbers $a=\Delta^{Q^*_s}_{a_1a_2\ldots a_n\ldots}$ and $b=\Delta^{Q^*_s}_{b_1b_2\ldots b_n\ldots}$ such that the sequence
$\left(\frac{a_n+b_n}{2}\right)\in L$ the following equality holds:
\[
f_a\!\left(x_0=\Delta^{Q^{*}_{s}}_{c_1c_2\ldots c_n\ldots}\right)
=
f_b\!\left(x_0=\Delta^{Q^{*}_{s}}_{c_1c_2\ldots c_n\ldots}\right),\quad \text{where } c_n=\frac{a_n+b_n}{2}.
\]
\end{lemma}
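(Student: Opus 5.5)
The plan is a digit-by-digit comparison. Definition~(4) computes $f_a(x_0)$ and $f_b(x_0)$ from the same $Q^{*}_{s}$-digits $c_n$ of $x_0$. The images are therefore the numbers with $Q^{*}_{s}$-representations
\[
\Delta^{Q^{*}_s}_{|a_1-c_1|\,|a_2-c_2|\ldots|a_n-c_n|\ldots}
\quad\text{and}\quad
\Delta^{Q^{*}_s}_{|b_1-c_1|\,|b_2-c_2|\ldots|b_n-c_n|\ldots}.
\]
It suffices to show that these two digit sequences coincide term by term. Equal sequences in $L$ give equal values of the series~(2), hence equal points.

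The key step is elementary. Since $c_n=\frac{a_n+b_n}{2}$, we have
\[
a_n-c_n=\frac{a_n-b_n}{2}, \qquad b_n-c_n=\frac{b_n-a_n}{2}=-(a_n-c_n).
\]
Hence $|a_n-c_n|=|b_n-c_n|=\frac{|a_n-b_n|}{2}$ for every $n\in N$. The hypothesis $\bigl(\frac{a_n+b_n}{2}\bigr)\in L$ is used exactly here: it says each $c_n$ is an integer in $A$. Then $(c_n)$ is a legitimate digit sequence, so it determines a point $x_0\in[0,1]$ with that $Q^{*}_{s}$-representation. Also, $|a_n-c_n|\in A$, since it is an integer with $|a_n-c_n|\le \max(a_n,c_n)\le s-1$. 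So the common image sequence lies in $L$ and the right-hand side of~(4) is well defined.

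The only point needing care is the convention for $Q^{*}_{s}$-binary points, where $f_a$ is evaluated through the representation with period $(0)$. The lemma specifies $x_0$ by the particular sequence $(c_n)$, so we read both sides as~(4) applied to that same sequence $(c_n)$. Suppose instead $(c_n)$ ends in $(s-1)$ and one insists on the period-$(0)$ representation of $x_0$. Then $(a_n)$ and $(b_n)$ eventually have average $s-1$, which forces $a_n=b_n=s-1$ from some index on. In that case the two representations of $x_0$ are transformed identically by $f_a$ and $f_b$, because the prefixes agree by the computation above and the tails coincide. So the equality holds under either convention.

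A similar subtlety is how $a$ and $b$ are written: the statement writes them via $Q^{*}_{s}$-representations, while $f_a$ uses $s$-adic digits. I will simply treat $(a_n)$ and $(b_n)$ as the digit sequences defining the functions. With these remarks, the proof is just the digitwise identity above.
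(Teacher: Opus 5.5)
Your core argument is the same as the paper's. Both compute $|a_n-c_n|=|b_n-c_n|=\frac{|a_n-b_n|}{2}$ digit by digit and apply (4) to the single sequence $(c_n)$. Your first reading of the binary-point issue is exactly the tacit reading in the paper's proof, and under that reading what you write is fine.

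The paragraph claiming the equality also holds under the period-$(0)$ convention is false. Let $m$ be the last index with $c_m\neq s-1$. Then the period-$(0)$ representation of $x_0$ is $c_1\ldots c_{m-1}[c_m+1](0)$. Your computation covers the indices $n<m$, and it covers the tail, where you correctly deduce $a_n=b_n=s-1$. At index $m$, however, the digit is $c_m+1$, not $c_m$, and
\[
\bigl|a_m-c_m-1\bigr|=\bigl|\tfrac{a_m-b_m}{2}-1\bigr|, \qquad \bigl|b_m-c_m-1\bigr|=\bigl|\tfrac{a_m-b_m}{2}+1\bigr|.
\]
These agree only if $a_m=b_m$.

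For a concrete failure, take $s=3$, $(a_n)=0(2)$ and $(b_n)=(2)$. Then $(c_n)=1(2)$ and $x_0=\Delta^{Q^{*}_{3}}_{1(2)}=\Delta^{Q^{*}_{3}}_{2(0)}$. Evaluating through $2(0)$ gives $f_a(x_0)=\Delta^{Q^{*}_{3}}_{(2)}=1$. But it gives $f_b(x_0)=\Delta^{Q^{*}_{3}}_{0(2)}=\Delta^{Q^{*}_{3}}_{1(0)}=q_{01}<1$. So under the paper's own convention the lemma actually fails here.

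You should delete the ``either convention'' claim and do one of the following:
\begin{enumerate}
\item State the lemma for the formal evaluation of (4) on the sequence $(c_n)$.
\item Add a hypothesis, such as $a_m=b_m$ at the switching index, or that $(c_n)$ does not end in $(s-1)$ unless $x_0=1$.
\end{enumerate}
The second hypothesis is automatic if the parameters' digit sequences are normalized to avoid an $(s-1)$-tail, except for $1=\Delta^s_{(s-1)}$. In that case $c_n=s-1$ eventually forces $a=b=1$ and $x_0=1$, which has a unique representation.
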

\begin{proof}
Let a number $a=\Delta^{Q^{*}_{s}}_{a_1a_2\ldots a_n\ldots}$
be given. Choose a number $b\in[0,1]$ such that $b=\Delta^{Q^{*}_{s}}_{b_1b_2\ldots b_n\ldots}$ and $\frac{a_n+b_n}{2}\in A$ for all $n\in N$ (obviously, such a number exists). Then\\
\begin{align*}
  f_a\left(\Delta^{Q^{*}_{s}}_{c_1\ldots c_n\ldots}\right) &= \Delta^{Q^{*}_{s}}_{\frac{|a_1-b_1|}{2}\ldots
\frac{|a_n-b_n|}{2}\ldots}
=  \Delta^{Q^{*}_{s}}_{\frac{|b_1-a_1|}{2}\ldots
\frac{|b_n-a_n|}{2}\ldots} = \\
  & = \Delta^{Q^{*}_{s}}_{\frac{|2b_1-(a_1+b_1)|}{2}\ldots
\frac{|2b_n-(a_n+b_n)|}{2}\ldots}
= \Delta^{Q^{*}_{s}}_{\left|b_1-\frac{a_1+b_1}{2}\right|
\ldots
\left|b_n-\frac{a_n+b_n}{2}\right|
\ldots}
=
f_b(x_0). 
\end{align*}
\end{proof}

\begin{corollary}
If $a=\Delta^s_{[2d_1][2d_2]\ldots[2d_n]\ldots}$, then
$f_a\!\left(x=\Delta^{Q^{*}_{s}}_{d_1 d_2 \ldots d_n \ldots}\right)=x$.

Indeed, for $a=\Delta^s_{[2d_1]\ldots[2d_n]\ldots}$ we obtain
$f_a\!\left(x=\Delta^{Q^{*}_{s}}_{d_1 \ldots d_n \ldots}\right)=
\Delta^{Q^{*}_{s}}_{|2d_1-d_1|\ldots|2d_n-d_n|\ldots} =x$.
\end{corollary}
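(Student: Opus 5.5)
The corollary follows from the definition (4) of $f_a$ by computing digit by digit; Lemma 1 is not needed. First I check that the input is legitimate. The hypothesis $a=\Delta^s_{[2d_1][2d_2]\ldots[2d_n]\ldots}$ means $a_n=2d_n\in A$ for every $n$, so $0\le d_n\le (s-1)/2$. Hence each $d_n$ is a nonnegative integer lying in $A$, and $(d_n)\in L$. The point $x=\Delta^{Q^{*}_{s}}_{d_1d_2\ldots d_n\ldots}$ is therefore a well-defined element of $[0,1]$.

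Next I compute the image digits. By (4), the $n$-th digit of $f_a(x)$ is $|a_n-d_n|=|2d_n-d_n|=|d_n|=d_n$, using $d_n\ge 0$. So the digit sequence of $f_a(x)$ is exactly $(d_n)$, and $f_a(x)=\Delta^{Q^{*}_{s}}_{d_1d_2\ldots}=x$.

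The only delicate step is the convention for $Q^{*}_{s}$-binary points. If $x$ is $Q^{*}_{s}$-binary, $f_a$ is evaluated on the representation with period $(0)$, so I must check that this is the representation I used. If $(d_n)$ ends in $(0)$, the representations agree. If $(d_n)$ ends in $(s-1)$, then $2(s-1)\le s-1$ forces $s=1$, which is impossible. So $(d_n)$ never ends in $(s-1)$ and the computation above applies.

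The argument assumes the $s$-adic digit sequence of $a$ is the one given in the statement, i.e. the chosen $s$-adic expansion of $a$ has the form $([2d_n])$. No further estimates are needed.
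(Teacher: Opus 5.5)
Your proof is correct and follows the paper's own argument: the direct digit-by-digit computation $|a_n-d_n|=|2d_n-d_n|=d_n$ from definition (4). Your extra check that $(d_n)$ cannot end in $(s-1)$, so that the period-$(0)$ convention at $Q^{*}_{s}$-binary points is respected, is a worthwhile detail the paper leaves implicit.

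One minor slip: integrality of $d_n$ does not follow from $2d_n\in A$ (e.g.\ $2d_n=1$). It is part of the implicit hypothesis that $\Delta^{Q^{*}_{s}}_{d_1d_2\ldots}$ is a genuine $Q^{*}_{s}$-representation, i.e.\ $(d_n)\in L$.
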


\begin{theorem}
Functions $f_a$ of the class $F$ are continuous on the set of
$Q^{*}_{s}$-unary numbers, whereas on the set of $Q^{*}_{s}$-binary numbers
the functions $f_a$ are continuous only for $a=0$ and $a=1$.
\end{theorem}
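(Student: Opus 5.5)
For a $Q^{*}_{s}$-unary point $x_0=\Delta^{Q^{*}_s}_{\alpha_1\alpha_2\ldots}$, I plan to reduce continuity to the convergence of cylinders. Since $x_0$ has a unique representation, it is not an endpoint of any cylinder $\Delta^{Q^{*}_s}_{\alpha_1\ldots\alpha_m}$; by property~2 that cylinder is a closed interval containing $x_0$ in its interior. So for every $m$ there is $\delta_m>0$ such that $|x-x_0|<\delta_m$ forces $x\in\Delta^{Q^{*}_s}_{\alpha_1\ldots\alpha_m}$. Under our convention, such an $x$ then has a representation whose first $m$ digits coincide with those of $x_0$. Hence $f_a(x)$ and $f_a(x_0)$ both lie in the cylinder $\Delta^{Q^{*}_s}_{|a_1-\alpha_1|\ldots|a_m-\alpha_m|}$. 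By property~4 the length of that cylinder is $\prod_{i\le m}q_{|a_i-\alpha_i|i}\le\prod_{i\le m}\max_j q_{jn}$, which tends to $0$ by condition~(1). Therefore $|f_a(x)-f_a(x_0)|\to0$. This argument needs no restriction on $a$.

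For a $Q^{*}_{s}$-binary point $x_0=\Delta^{Q^{*}_s}_{\alpha_1\ldots\alpha_m(0)}=\Delta^{Q^{*}_s}_{\alpha_1\ldots[\alpha_m-1](s-1)}$ with $\alpha_m\ge1$, I would compute the one-sided limits. From the right, points near $x_0$ lie in $\Delta^{Q^{*}_s}_{\alpha_1\ldots\alpha_m 0\ldots0}$ (with $k$ zeros). The argument of the first part then shows $f_a(x)\to f_a(x_0)=\Delta^{Q^{*}_s}_{|a_1-\alpha_1|\ldots|a_m-\alpha_m|a_{m+1}a_{m+2}\ldots}$, so $f_a$ is right-continuous. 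From the left, points near $x_0$ lie in $\Delta^{Q^{*}_s}_{\alpha_1\ldots[\alpha_m-1](s-1)\ldots(s-1)}$. Here $f_a(x)$ converges to
\[
L^-=\Delta^{Q^{*}_s}_{|a_1-\alpha_1|\ldots|a_{m-1}-\alpha_{m-1}|\,|a_m-\alpha_m+1|\,[s-1-a_{m+1}][s-1-a_{m+2}]\ldots}.
\]
We have to use the caveat that the limit point is defined via the digit sequence, and the cylinder argument gives exactly this limit. Continuity at $x_0$ is then equivalent to $L^-=f_a(x_0)$.

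For $a=0$, $f_a$ is the identity. For $a=1$, whose $s$-adic digits are all $s-1$, $f_a$ is the inversor~(5). In both cases $L^-$ and $f_a(x_0)$ are the two representations of one binary number, as the remark before Lemma~1 notes, so $f_a$ is continuous everywhere. For any other $a$, I would exhibit a binary $x_0$ where $L^-\ne f_a(x_0)$. This is the main obstacle: two different digit sequences may represent the same number, so the comparison has to be done carefully. Using the uniqueness of representations away from binary numbers, $L^-=f_a(x_0)$ holds only if either the sequences coincide, or one ends in $(0)$ and the other in $(s-1)$ with the appropriate digit shift at one position.

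The tails $(a_{m+i})$ and $(s-1-a_{m+i})$ can coincide only when $s$ is odd and all these digits equal $(s-1)/2$. The shifted configuration requires the tails to be $0$ and $s-1$. So, if $a\ne0,1$, I would choose $m$ with $a_{m+1}\notin\{0,s-1\}$, or a place where the tail is not eventually constant in $\{0,s-1\}$. I would then pick $\alpha_m$ so that $|a_m-\alpha_m|\ne|a_m-\alpha_m+1|$ and compare the first differing digit. The delicate cases are $a$ with tail eventually $(0)$ or $(s-1)$, such as $a=\Delta^s_{c(0)}$ with $c$ finite. For these I would place $x_0$ at a rank $m$ inside the non-constant prefix, choose $\alpha_m$ there appropriately, and verify the inequality directly.
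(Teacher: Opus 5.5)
Your proposal is correct. On $Q^{*}_s$-unary points it is the paper's argument, made rigorous. The paper only asserts that $x\to x_0$ is equivalent to the first differing digit index tending to $\infty$; your observation that $x_0$ lies inside every cylinder $\Delta^{Q^{*}_s}_{\alpha_1\ldots\alpha_m}$ is what justifies this. You also bound by the correct image-cylinder lengths $\prod q_{|a_i-\alpha_i|i}$, where the paper writes $\prod q_{\alpha_i i}$. Two minor slips. First, $0$ and $1$ are unary but are endpoints of cylinders, so there you need one-sided neighbourhoods in $[0,1]$. Second, the neighbourhood should lie in the \emph{open} cylinder, so that the conventional representation of $x$ really begins with $\alpha_1\ldots\alpha_m$.

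On binary points the routes genuinely differ, because the paper gives no argument, only the sentence that $f_a$ has a non-removable discontinuity there. Your computation (right-continuity from the period-$(0)$ convention, plus the explicit left limit $L^-$) supplies an actual proof. It also shows more: the discontinuities are jumps, and for $a\ne0,1$ they need not occur at \emph{every} binary point. So the statement must be read, as you read it, as ``$f_a$ is continuous at all binary points if and only if $a\in\{0,1\}$''.

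Your last step can be made fully explicit. The numbers $|a_m-\alpha_m|$ and $|a_m-\alpha_m+1|$ always differ by exactly $1$, so the two digit sequences can never coincide; your worry about tails equal to $\frac{s-1}{2}$ is unnecessary. Their first difference is at position $m$. Hence $f_a$ is continuous at $\Delta^{Q^{*}_s}_{\alpha_1\ldots\alpha_m(0)}$, $\alpha_m\ge1$, if and only if one of the following holds:
\begin{itemize}
\item $\alpha_m\le a_m$ and $a_{m+i}=s-1$ for all $i\ge1$;
\item $\alpha_m>a_m$ and $a_{m+i}=0$ for all $i\ge1$.
\end{itemize}
This settles your delicate cases at once:
\begin{itemize}
\item if $(a_n)$ ends in $(0)$ and $a\ne0$, take $m$ to be the last index with $a_m\ne0$, and $\alpha_m=1$;
\item if $(a_n)$ ends in $(s-1)$ and $a\ne1$, take $m$ to be the last index with $a_m\ne s-1$, and $\alpha_m=s-1$.
\end{itemize}
For example, with $s=3$ and $a=\Delta^3_{1(0)}$, $f_a$ is continuous at $\Delta^{Q^{*}_3}_{2(0)}$ and its only discontinuity is at $\Delta^{Q^{*}_3}_{1(0)}$.
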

\begin{proof}
Let $a=\Delta^{s}_{a_1a_2\ldots a_n\ldots}$. We prove the continuity of the corresponding function $f_a$ at a $Q^{*}_{s}$-unary point. Let $x_0=\Delta^{Q^{*}_{s}}_{\alpha_1\alpha_2\ldots\alpha'_n\ldots}$
be a $Q^{*}_{s}$-unary point. 
Consider a point $x$ such that $x\neq x_0$. Then there exists an index $n$
for which $\alpha'_n=\alpha_n(x_0)\neq \alpha_n(x)$ while $\alpha_k(x)=\alpha_k(x_0)$ for $k<n$.
The condition $n\to\infty$ is equivalent to $x\to x_0$.
By the definition of continuity of the function $f$ at the point $x_0$, that is,
$\lim\limits_{x\to x_0} f(x)=f(x_0)$, we obtain $\lim\limits_{x\to x_0}\lvert f(x)-f(x_0)\rvert =0$.
We show that
\[
\lim_{x\to x_0}\lvert f_a(x)-f_a(x_0)\rvert =0.
\]

Consider the expression
\begin{align*}
|f_a(x)-f_a(x_0)|
&=
\left|
f_a\!\left(\Delta^{Q^{*}_{s}}_{\alpha_1\ldots\alpha_{n-1}\alpha_n\ldots}\right)
-
f_a\!\left(\Delta^{Q^{*}_{s}}_{\alpha_1\ldots\alpha_{n-1}\alpha'_n\ldots}\right)
\right| \\
&=
\Bigl|
\Delta^{Q^{*}_{s}}_{|a_1-\alpha_1|\ldots|a_{n-1}-\alpha_{n-1}||a_n-\alpha_n|\ldots}
-
\Delta^{Q^{*}_{s}}_{|a_1-\alpha_1|\ldots|a_{n-1}-\alpha_{n-1}||a_n-\alpha'_n|\ldots}
\Bigr| \\
&=
\prod_{i=1}^{n-1} q_{\alpha_i i}\cdot W, \quad \text{where } 0<W\le 1.
\end{align*}
Then
\[
\lim_{x\to x_0}\lvert f_a(x)-f_a(x_0)\rvert
\le
\lim_{n\to\infty}\prod_{i=1}^{n-1} q_{\alpha_i i}
\le
\lim_{n\to\infty}\prod_{i=1}^{n-1}
\max_{\alpha_i\in A}\{q_{\alpha_i i}\}
=0.
\]
Therefore, $\lim\limits_{x\to x_0} f_a(x)=f_a(x_0)$ at every $Q^{*}_{s}$-unary point.

According to the adopted convention, the value of the function $f_a$ at a $Q^{*}_s$-binary point (3) is determined using the first $Q^{*}_s$-representation and evaluated via formula (4). The function exhibits an essential (non-removable) discontinuity at that point.
\end{proof}

\section{Structural properties of the functions in the class $F$}
\begin{theorem}
The value set $E_{f_a}$ of the function $f_a$, where $a = \Delta^s_{a_1 a_2 \ldots a_n \ldots}$, is given by
\[
C[Q^{*}_s; V_n] =\{ x \in [0,1] : x = \Delta^{Q^{*}_s}_{\alpha_1  \ldots \alpha_n \ldots}, 
\alpha_n \in V_n \equiv \{0,1,\dots, \max(s-1-a_n, a_n)\}, \ n\in N \}.
\]
\end{theorem}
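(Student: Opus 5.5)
The plan is to prove the two inclusions $E_{f_a}\subseteq C[Q^{*}_s;V_n]$ and $C[Q^{*}_s;V_n]\subseteq E_{f_a}$ separately. Everything rests on one elementary digit fact. For fixed $n$, the map $\varphi_n(\alpha)=|a_n-\alpha|$ sends $A=\{0,1,\dots,s-1\}$ onto exactly $V_n=\{0,1,\dots,\max(s-1-a_n,a_n)\}$. Indeed, as $\alpha$ runs over $0,\dots,a_n$ it produces $a_n,\dots,0$, and as $\alpha$ runs over $a_n,\dots,s-1$ it produces $0,\dots,s-1-a_n$. The union of these two ranges is the full initial segment up to the larger endpoint.

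\textbf{Inclusion $\subseteq$.} Take any $x$, written in the adopted representation with period $(0)$ if $x$ is $Q^{*}_s$-binary. By (4), $f_a(x)$ is the number whose $Q^{*}_s$-expansion has digits $\gamma_n=|a_n-\alpha_n|$. By the digit fact, $\gamma_n\in V_n$ for every $n$. Since $C[Q^{*}_s;V_n]$ is defined as a set of points admitting some expansion with digits in $V_n$, it follows that $f_a(x)\in C[Q^{*}_s;V_n]$.

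\textbf{Inclusion $\supseteq$.} Take $y=\Delta^{Q^{*}_s}_{\gamma_1\gamma_2\ldots}$ with $\gamma_n\in V_n$. For each $n$ choose a preimage digit $\alpha_n\in\varphi_n^{-1}(\gamma_n)$: set $\alpha_n=a_n+\gamma_n$ if this is at most $s-1$, and otherwise $\alpha_n=a_n-\gamma_n$. Put $x=\Delta^{Q^{*}_s}_{\alpha_1\alpha_2\ldots}$. If $(\alpha_n)$ does not end in the period $(s-1)$, then $(\alpha_n)$ is exactly the representation the convention uses for $x$, so $f_a(x)=y$ directly by (4).

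\textbf{Main obstacle: tails in $(s-1)$.} The difficulty is the case where every admissible choice forces $\alpha_n=s-1$ for all $n>m$. Then the adopted representation of $x$ is the one ending in $(0)$, and $f_a(x)$ is a different point. I would handle this in three steps.

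First, wherever the preimage is not unique, I would use the freedom to break the tail. When $\gamma_n\le\min(a_n,s-1-a_n)$ both $a_n\pm\gamma_n$ are admissible, and at most one of them equals $s-1$ unless $\gamma_n=0$. If this can be done for infinitely many $n$, the tail is no longer $(s-1)$ and we are done.

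Second, if the tail is genuinely forced, then for all large $n$ we have $s-1\in\varphi_n^{-1}(\gamma_n)$, and the only preimage is $s-1$. Hence $\gamma_n=s-1-a_n$, and the tail of $(a_n)$ is constrained: either $a_n=s-1$ with $\gamma_n=0$, or $2a_n<s-1$.

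Third, in this forced case I would exploit that $y$ itself is $Q^{*}_s$-binary, or nearly so, and switch to its other expansion via (3). For instance, if $a_n=s-1$ eventually, then $\gamma_n=0$ eventually. In that case $V_n=A$ for large $n$, so the other expansion of $y$, which ends in $(s-1)$ after decreasing one digit, still has digits in $V_n$. Its preimage then ends in $(0)$ and is legitimate. The general mixed case, where the tail of $(a_n)$ contains both digits $s-1$ and digits with $2a_n<s-1$, needs the same switching idea applied at the last changed position, checking that the decreased digit still lies in $V_m$.

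This case analysis with the representation convention is the step I expect to require the most care. The rest of the proof is the digit computation above.
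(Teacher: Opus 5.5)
\textbf{There is a genuine gap, and it cannot be closed.} Your digit fact and your inclusion $E_{f_a}\subseteq C[Q^{*}_s;V_n]$ are correct. Together they are all the paper's proof contains: it only reads off the range of $\alpha\mapsto|a_n-\alpha|$ and never revisits the period-$(0)$ convention. The gap is your third step. It cannot be repaired, because its premise, that in the forced case $y$ is ``$Q^{*}_s$-binary, or nearly so'', is false.

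In your forced case $\gamma_n=s-1-a_n$ for all large $n$. This tail ends in $(0)$ or $(s-1)$ only when $(a_n)$ is eventually $s-1$ or eventually $0$. Otherwise $y$ is $Q^{*}_s$-unary and has no second expansion. Then $f_a(x)=y$ forces $|a_n-\alpha_n(x)|=\gamma_n$ for every $n$, so the adopted expansion of $x$ would have to be one of the forced preimages, which all end in $(s-1)$ without being $(s-1)$. That is impossible, so $y\notin E_{f_a}$.

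\textbf{Counterexamples.}
\begin{enumerate}
\item Mixed tail: take $s=3$ and $a=\Delta^3_{(02)}=\frac14$. Then $V_n=A$ for all $n$, so $C[Q^{*}_3;V_n]=[0,1]$. The unary point $y=\Delta^{Q^{*}_3}_{0(02)}$ has the single digit preimage $0(2)$, which is the discarded expansion of $\Delta^{Q^{*}_3}_{1(0)}$. Hence $y\notin E_{f_a}$.
\item No mixing: take $s=4$ and $a=\Delta^4_{(1)}$. The point $\Delta^{Q^{*}_4}_{0(2)}$ lies in the Cantor-type set $C[Q^{*}_4;\{0,1,2\}]$. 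Its only preimage is $1(3)$, so it is not a value of $f_a$. Your case list (``$a_n=s-1$ eventually'' or ``mixed'') does not even cover this.
\item Binary $y$: when $(a_n)$ is eventually $0$, the tail of $\gamma$ is $(s-1)$, and switching expansions \emph{increases} a digit, which can leave $V_k$. For $s=4$ and $a=\Delta^4_{2(0)}$ one gets $E_{f_a}=[0,1-q_{31})$. Under your reading of $C$, however, $C[Q^{*}_4;V_n]=[0,1-q_{31}]$.
\end{enumerate}

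\textbf{What your method actually proves.} The reverse inclusion is false under the paper's convention, so no finer case analysis will establish it. Your method does prove the following corrected statement.
\begin{enumerate}
\item $E_{f_a}\subseteq C[Q^{*}_s;V_n]$.
\item The difference $C[Q^{*}_s;V_n]\setminus E_{f_a}$ is at most countable. Each missing point equals $\Delta^{Q^{*}_s}_{|a_1-\alpha_1||a_2-\alpha_2|\ldots}$ for some sequence $(\alpha_n)$ ending in $(s-1)$, and that sequence determines the point.
\item $E_{f_a}$ is dense in the closed set $C[Q^{*}_s;V_n]$. Take a $V$-expansion $\gamma$ of $y$ and replace its tail after position $k$ by $a_{k+1}a_{k+2}\ldots$. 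The result is a value of $f_a$, since its preimage digits can be taken as $\alpha_1\ldots\alpha_k(0)$. It lies within $\prod_{i\le k}q_{\gamma_i i}$ of $y$.
\end{enumerate}
The theorem becomes true for every $a$ only if $f_a$ is regarded as acting on digit sequences, with both expansions of $Q^{*}_s$-binary points allowed. That is what the paper's argument tacitly assumes.
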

\begin{proof}
Let the function $f_a$ be given, with $a = \Delta^s_{a_1 a_2 \ldots a_n \ldots}$.
Then the value of the function $f_a$ has the representation 
$\Delta^{Q^{*}_s}_{|a_1-\alpha_1| \ldots |a_n-\alpha_n| \ldots}$ with
$(\alpha_n) \in L$. 
It is clear that the $n$-th digit of the $Q^{*}_s$-representation of a number $y$ 
can take values from $V_n = \{ a_n, |a_n-1|, \ldots, |a_n - s + 1| \}$.
Since $a_n \in A$, then the elements of $V_n$ belong to the sequence of digits of 
the alphabet $A$, except for those whose sum with $a_n$ exceeds $(s-1)$, 
that is, the digits from $0$ to the larger of $(s-1-a_n)$ and $a_n$.
Thus, the set of possible values of an arbitrary digit in the $Q^{*}_s$-representation
of $y$ determines the value set of the function $f_a$.
\end{proof}

\begin{corollary}
The value set of the function $f_a$ is a subset of the interval
$[0, \Delta^{Q^{*}_s}_{c_1 c_2 \dots c_n \dots} ]$, where
$c_n = \max\{s-1-a_n, \, a_n\}$,  $n \in N$.
\end{corollary}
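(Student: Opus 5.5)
We will deduce the corollary from Theorem~2 together with the monotonicity of the $Q^{*}_s$-representation with respect to digits. By Theorem~2 every value $y\in E_{f_a}$ has the form $y=\Delta^{Q^{*}_s}_{\gamma_1\gamma_2\ldots\gamma_n\ldots}$ with $\gamma_n\in V_n=\{0,1,\dots,c_n\}$, where $c_n=\max\{s-1-a_n,a_n\}$. Since all digits are nonnegative, $y\ge 0$ is immediate. So the whole task is to show that $y\le \Delta^{Q^{*}_s}_{c_1c_2\ldots c_n\ldots}$ whenever $\gamma_n\le c_n$ for all $n$.

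For the upper bound we use the cylinder structure rather than comparing the series termwise. Termwise comparison fails, because the products $\prod q_{\gamma_j j}$ are not monotone in the digits. Put $z=\Delta^{Q^{*}_s}_{c_1c_2\ldots}$ and suppose $y\ne z$. Then there is a first index $n$ with $\gamma_n\ne c_n$, and necessarily $\gamma_n<c_n$. Both points lie in the cylinder $\Delta^{Q^{*}_s}_{c_1\ldots c_{n-1}}$. By properties 1 and 2 of cylinders, this cylinder is the union of the consecutive closed intervals $\Delta^{Q^{*}_s}_{c_1\ldots c_{n-1}i}$, $i=0,\dots,s-1$, arranged left to right. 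The right endpoint of the $i$-th subinterval is the left endpoint of the $(i+1)$-th, since $\beta_{i+1,n}=\beta_{in}+q_{in}$. Therefore
\[
y\le \max \Delta^{Q^{*}_s}_{c_1\ldots c_{n-1}\gamma_n}\le \min\Delta^{Q^{*}_s}_{c_1\ldots c_{n-1}c_n}\le z .
\]

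The only mildly delicate point is to justify the ordering of the subcylinders. Using property 2 with $m=n$, the left endpoint of $\Delta^{Q^{*}_s}_{c_1\ldots c_{n-1}i}$ equals
\[
\sum_{k<n}\beta_{c_kk}\prod_{j<k}q_{c_jj}+\beta_{in}\prod_{j<n}q_{c_jj}.
\]
This quantity is increasing in $i$ because $\beta_{in}$ is. Its length is $q_{in}\prod_{j<n}q_{c_jj}$. Adding the length to the left endpoint of the $i$-th subcylinder gives exactly the left endpoint of the $(i+1)$-th one, which is the required adjacency. With this ordering established, the displayed inequality gives $y\le z$ for every $y\in E_{f_a}$. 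Hence $E_{f_a}\subset[0,\Delta^{Q^{*}_s}_{c_1c_2\ldots}]$.

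We also note, though it is not needed for the statement, that the endpoints themselves are attained. The value $0$ is attained by taking $\alpha_n=a_n$. The value $z$ is attained by taking $\alpha_n=0$ or $\alpha_n=s-1$ according to which of $a_n$ and $s-1-a_n$ is larger; by Theorem~2 this lies in the value set.
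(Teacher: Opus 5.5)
Your proof is correct and follows the route the paper intends: the inclusion $E_{f_a}\subseteq C[Q^{*}_s;V_n]$ from Theorem~2, combined with order-preservation of the $Q^{*}_s$-representation. You also rightly justify the order-preservation step via adjacent subcylinders, which the paper leaves implicit and which termwise comparison would not give.

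Only your closing side remark is wrong. Under the period-$(0)$ convention the endpoint $\Delta^{Q^{*}_s}_{c_1c_2\ldots}$ need not be attained. For example, take $s=3$ and $a=\Delta^3_{2(0)}$, so $c_n\equiv 2$. Attaining $1=\Delta^{Q^{*}_3}_{(2)}$ forces the digit sequence $0(2)$ for $x$, but $x=\Delta^{Q^{*}_3}_{1(0)}$ is evaluated through $1(0)$, giving $f_a(x)=\Delta^{Q^{*}_3}_{1(0)}\neq 1$. This does not affect the statement.
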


\begin{theorem}
Let $s>2$. The value set $E_{f_a}$ of the function $f_a$, where
$a = \Delta^s_{a_1 a_2 \ldots a_n \ldots}$ is a union of intervals if $a_n \in \{1,2,\dots,s-2\}$ for only finitely many indices $n$, 
or a continuous nowhere dense zero-measure set if $a_n \in \{1,2,\dots,s-2\}$ for infinitely many indices $n$.
\end{theorem}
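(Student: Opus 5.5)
The plan is to rely on Theorem 2, which identifies the value set as $E_{f_a}=C[Q^*_s;V_n]$ with $V_n=\{0,1,\dots,m_n\}$ and $m_n=\max(s-1-a_n,a_n)$. Note that $V_n=A$ exactly when $a_n\in\{0,s-1\}$. Otherwise $a_n\in\{1,\dots,s-2\}$, so $m_n\le s-2$ and $V_n$ is a proper initial segment of $A$ that omits the digit $s-1$. The dichotomy in the statement is therefore the dichotomy ``$V_n\ne A$ finitely often'' versus ``$V_n\ne A$ infinitely often''. I treat the two cases separately.

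\emph{Finite case.} Let $N$ be the last index with $a_N\in\{1,\dots,s-2\}$, so $V_n=A$ for all $n>N$. Then
\[
E_{f_a}=\bigcup_{(c_1,\dots,c_N)\in V_1\times\cdots\times V_N}\Delta^{Q^*_s}_{c_1\dots c_N}.
\]
Indeed, once the first $N$ digits are fixed, the tail is unrestricted, and by property (2) and the definition of cylinders the tail sweeps out the whole closed cylinder. So $E_{f_a}$ is a finite union of closed intervals. If one wants more precision, consecutive digits in $V_n$ give adjacent cylinders; the only gaps come from the missing top digits, namely between $\Delta_{c_1\dots c_{k-1}m_k}$ and $\Delta_{c_1\dots c_{k-1}[c_{k-1}+1]0\dots}$.

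\emph{Infinite case.} The key estimate is this: each cylinder $\Delta_{c_1\dots c_{n-1}}$ with $V_n\ne A$ loses at least the subcylinder $\Delta_{c_1\dots c_{n-1}[s-1]}$, whose length is $q_{s-1,n}\,|\Delta_{c_1\dots c_{n-1}}|$. Let $E_n$ be the union of admissible rank-$n$ cylinders. Then
\[
\lambda(E_n)\le \prod_{k\le n,\ V_k\ne A}(1-q_{s-1,k})\le (1-\varepsilon)^{\#\{k\le n:V_k\ne A\}}\to 0,
\]
using $q_{in}>\varepsilon$ from the Remark. Since $E_{f_a}=\bigcap_n E_n$, the set is closed and has Lebesgue measure zero. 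A closed null set contains no interval, so it is nowhere dense. Zero-dimensionality of this Cantor-type set also follows directly from the Remark.

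It remains to show that the set has the cardinality of the continuum (``continuous'' in the statement is read as continuum). Each $V_n$ contains at least two digits: $m_n\ge \lceil (s-1)/2\rceil\ge1$ when $s>2$, so $\{0,1\}\subset V_n$. This gives an injection from $\{0,1\}^{\mathbb N}$ into $E_{f_a}$, once the countably many $Q^*_s$-binary coincidences are discarded; it is cleanest to restrict to sequences that are not eventually $0$. I would also note perfectness: no point is isolated, because every admissible cylinder splits into at least two admissible subcylinders whose lengths tend to $0$.

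The main obstacle I expect is the measure bound, specifically justifying that the removed pieces at stage $n$ have length proportional to the parent cylinder uniformly. This is exactly why the uniform lower bound $\varepsilon$ from the Remark is needed; under condition (1) alone, $\sum q_{s-1,n}$ over the relevant $n$ could converge and the measure would not vanish. A secondary point of care in the finite case is checking that the cylinder-union description matches the chosen digit set, including endpoint identifications for binary points.
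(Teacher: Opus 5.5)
Your proof is correct, taking Theorem~2 as given (the paper does the same). It matches the paper on the continuum part ($\{0,1\}\subset V_n$) and on the finite case, but the infinite case goes a different way.

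\textbf{Measure zero.} The paper assumes ``without loss of generality'' that $q_{[s-1]n}=\min_i q_{in}$. It then computes the measure of the single ``largest'' set with $V_n=\{0,\dots,s-2\}$ for all $n$ and transfers this to every admissible $a$. That transfer fails when also $a_n\in\{0,s-1\}$ infinitely often: then $V_n=A$ at those positions, and $E_{f_a}$ is not contained in that set. Its series identity also silently needs $\sum_n q_{s-1,n}=\infty$. Your bound $\lambda(E_n)\le\prod_{k\le n,\,V_k\ne A}(1-q_{s-1,k})\le(1-\varepsilon)^{\#\{k\le n:\,V_k\ne A\}}$ handles arbitrary interleavings of restricted and unrestricted positions. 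It also shows exactly where the Remark's hypothesis $q_{in}>\varepsilon$ is used, which, as you note, cannot be dropped.

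\textbf{Nowhere density.} Here the trade-off reverses. Deducing it from ``closed and null'' makes it depend on $\varepsilon$. The paper instead uses a gap argument: inside every admissible cylinder, at the next restricted position, the subcylinder ending in $s-1$ has interior disjoint from the set. (Its second example, ending in $0$, is wrong, since $0\in V_n$ always.) Combined with cylinder lengths tending to $0$ by (1), this gives nowhere density from (1) alone, even when the measure is positive.

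\textbf{The convention.} Because of the period-$(0)$ convention, $E_{f_a}$ can miss countably many points of $C[Q^*_s;V_n]$. For example, take $s=4$ and $a_n=1$ for all $n$. The only digit sequence $(\alpha_n)$ with $(|1-\alpha_n|)=0(2)$ is $1(3)$, which the convention replaces by $2(0)$, so $\Delta^{Q^*_4}_{0(2)}\notin E_{f_a}$. Hence $E_{f_a}$ need not be closed, and your claim $E_{f_a}=\bigcap_n E_n$ is literally false. Run your closed-and-null argument on $C[Q^*_s;V_n]$ instead and pass to the subset; null sets and nowhere dense sets are preserved under subsets. In the finite case you likewise get a finite union of possibly half-open intervals, which is still what the statement asks for. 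Your continuum argument is already immune to this: restricting to sequences with infinitely many $1$'s lets you realize each $b_n=1$ by some $\alpha_n\ne s-1$.
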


\begin{proof}
To prove the continuity of the value set $E_{f_a}$, we show that for an arbitrary
digit in the $Q^{*}_s$-representation, the set of possible function values
$V_n = \{0, 1, \ldots, \max\{s-1-a_n, a_n\}\}$ for $a_n \in A_s$,
contains at least two elements.
Since the elements $(s-1-a_n)$ and $a_n$ are inverse in the $s$-adic alphabet, then
the minimal value that $\max\{s-1-a_n, a_n\}$ can take equal $\frac{s}{2}$ if $s$ is even, 
or $\frac{s-1}{2}$ if $s$ is odd. 
Thus, for any $n$, the set $V_n$ contains both $0$ and $1$ for any $s>2$, 
so each digit in the $Q^{*}_s$-representation of a function value has at least two alternatives.
Since the quantity of digits in the representation of $y$ is countable, then the set of distinct sequences of digits specifying the function values constitutes a continuum.
Hence, the value set $E_{f_a}$ is continuum.

Let there exist $k$ indices $n_1, n_2, \ldots, n_k$ for which 
$a_{n_i} \in \{1,2,\ldots,s-2\}$ for all $i=1,\ldots,k$. 
For simplicity, assume that $a_{n_i} = 1$ for all $i=1,\dots,k$. 
Then $V_{n_i} = \{0,1,\ldots,s-2\}$, and therefore
\[
E_{f_a} = [0,1] \setminus 
\left(\bigcup_{r_{n_1} \neq s-1} \bigcup_{ r_{n_2} \neq s-1}  \ldots \bigcup_{r_{n_k} \neq s-1}
\Delta^{Q^{*}_s}_{r_1 r_2 \ldots r_m}\right).
\]
Since a cylinder of $Q^{*}_s$-representation is an interval, then after removing from the unit interval $s^k$ non-overlapping subintervals (cylinders of the form 
$\Delta^{Q^{*}_s}_{r_1 r_2 \ldots r_{i-1}[s-1]}$ for $i \in \{n_1, n_2, \ldots, n_k\}$),
we see that the set $E_{f_a}$ is a finite union of $(n_k+1)$-th rank cylinders. 
Hence, it is a finite union of intervals.

Let there exist infinitely many indices for which $a_{n_i} \in \{1,2,\ldots,s-2\}$ for all $i \in N$. 
Consider an arbitrary cylinder $\Delta^{Q^{*}_s}_{r_1 r_2 \ldots r_m}$.
There exist cylinders $\Delta^{Q^{*}_s}_{r_1 r_2 \ldots r_m \ldots r_{m+j}} \subset \Delta^{Q^{*}_s}_{r_1 r_2 \ldots r_m}$
that contain no points of the set $E_{f_a}$. For example, such cylinders are 
$\Delta^{Q^{*}_s}_{r_1 r_2 \ldots r_m \ldots r_{m+j-1}0}$ and $\Delta^{Q^{*}_s}_{r_1 r_2 \ldots r_m \ldots r_{m+j-1}[s-1]}$.
Since $m$ is an arbitrary natural number, then for any interval whose endpoints belong to $E_{f_a}$, there exists a whole interval of points not belonging to $E_{f_a}$. 
Hence, $E_{f_a}$ is a nowhere dense set.

To prove the zero measure of 
$C[Q^{*}_s; V_n]$, we show that its measure equals zero under the condition 
$q_{[s-1] n} = \min\limits_i \{q_{i n}\}$
which does not affect the generality of the argument. 
Consider the set
\[
C[Q^{*}_s; V_n] = \Bigl\{ x \in [0,1] : x = \Delta^{Q^{*}_s}_{\alpha_1 \alpha_2 \ldots \alpha_n \ldots}, \ 
\alpha_n \in V_n \equiv \{0,1,\dots,s-2\}, \ \forall n \in N \Bigr\}.
\]
The set $C[Q^{*}_s; V_n]$ can be represented as
\[
C[Q^{*}_s; V_n] = [0,1] \setminus 
\bigcup_{m=1}^{\infty} \left(
\bigcup_{c_1 \in V_1} \ldots \bigcup_{c_m \in V_m} 
\Delta^{Q^{*}_s}_{c_1 c_2 \ldots c_m [s-1]}\right).
\]
Then, by the additive property of the Lebesgue measure we have
\[
\lambda\left(C[Q^{*}_s; V_n]\right) 
= 1 - \sum_{m=1}^{\infty} 
\left( \prod_{i=1}^{m-1} q_{c_i i} \, q_{[s-1] m} \right) = 0.
\]
Since the Lebesgue measure of the ``largest'' set equals zero, it follows that the Lebesgue measure of the value set for any $a$ such that $a_n \in \{1,2,\dots,s-2\}$ for infinitely many indices $n$ is also zero.
\end{proof}

\begin{remark}
  If $s = 2$, then $E_{f_a} = [0,1]$.
\end{remark}

Recall that the level set of a function is defined as
\[
f_a^{-1}(y_0) = \{ x \in [0,1] : f_a(x) = y_0 \}.
\]

\textbf{Example 1.} Let $a = \Delta_5 (314)$. Then the level set $y_0 = \Delta^{Q^{*}_5}_{4(0)}$ 
is empty set; 
the level set $y_0 = \Delta^{Q^{*}_5}_{(0)}$ contains a single point
$f_a^{-1}(\Delta^{Q^{*}_5}_{(0)}) = \{ a = \Delta^{Q^{*}_5}_{(314)} \}$; 
the level set $y_0 = \Delta^{Q^{*}_5}_{\underbrace{110\dots110}_{3n}(0)}$
contains $4^n$ points;  
the level set $y_0 = \Delta^{Q^{*}_5}_{(110)}$ is a continuum set.

\begin{theorem}
Let $s > 2$. The level set $f_a^{-1}(y_0) = f_a^{-1}(\Delta^{Q^{*}_s}_{b_1 b_2 \ldots b_n \ldots})$ of the function $f_a$ generated by the parameter 
$a = \Delta^s_{a_1 a_2 \ldots a_n \ldots}$
has the following structure:

1) it is empty if $b_n = s-1$ and $a_n \in \{1,2,\dots,s-2\}$;  

2) it is finite if only for a finite number of indices $n$ we have either 
\[
\begin{cases}
  a_n + b_n \in A, \\
  a_n - b_n \in A;
\end{cases}
 \text{or} \quad b_n = 0, \quad \forall n \in N;
\]  

3) it is a continuum if for infinitely many indices $n$ we have
\[
\begin{cases}
  b_n \neq 0,  \\
  a_n + b_n \in A, \\
  a_n - b_n \in A.
\end{cases}
\]
\end{theorem}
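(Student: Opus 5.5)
The approach is to reduce the level set to a digit-by-digit counting problem. Fix $y_0=\Delta^{Q^{*}_s}_{b_1b_2\ldots b_n\ldots}$, taken, as in the adopted convention, with the representation ending in period $(0)$ if $y_0$ is $Q^{*}_s$-binary. Since $Q^{*}_s$-unary numbers have a unique representation, and the convention fixes one representation for binary numbers, we have $f_a(x)=y_0$ exactly when $|a_n-\alpha_n(x)|=b_n$ for every $n\in N$. (Equality of two $Q^{*}_s$-values whose digit strings differ is possible only at binary points. We would treat that separately: the two representations of $y_0$ give at most two digit strings, so the set of admissible strings is the union of at most two product sets. This does not change finiteness or cardinality.) Hence
\[
f_a^{-1}(y_0)=\{x=\Delta^{Q^{*}_s}_{\alpha_1\alpha_2\ldots}:\ \alpha_n\in W_n\ \forall n\},\qquad W_n=\{\alpha\in A:\ |a_n-\alpha|=b_n\}=A\cap\{a_n-b_n,\ a_n+b_n\}.
\]
So $|W_n|\in\{0,1,2\}$, with $|W_n|=2$ exactly when $b_n\neq0$ and both $a_n\pm b_n\in A$, and $|W_n|=1$ when $b_n=0$ or exactly one of $a_n\pm b_n$ lies in $A$.

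\textbf{Case 1).} If $b_n=s-1$ and $1\le a_n\le s-2$, then $a_n+s-1>s-1$ and $a_n-(s-1)<0$. So $W_n=\emptyset$, and the product set is empty. This is consistent with Theorem 2: here $s-1\notin V_n$. We only have to make sure that the alternative representation of a binary $y_0$ does not rescue the point. Its digits from position $n$ on are either those of $y_0$ or correspond to a changed tail, and the same emptiness argument applies at index $n$ or at a later index of the tail $(s-1)$.

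\textbf{Case 2).} If $|W_n|=2$ for only finitely many $n$ (and no $W_n$ is empty), the number of admissible strings is $2^k$ for some finite $k$. Distinct strings give at most two coincident points. Hence the level set is finite, with at most $2^k$ points (compare Example 1, where $4^n=2^{2n}$). When all $b_n=0$, every $W_n=\{a_n\}$, which gives the single point $\Delta^{Q^{*}_s}_{a_1a_2\ldots}$.

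\textbf{Case 3).} If $|W_n|=2$ for infinitely many $n$, the admissible strings form a set in bijection with $\{0,1\}^{N}$, which has cardinality of the continuum. Since each point of $[0,1]$ has at most two representations, the level set has cardinality of the continuum. More precisely, it is a Cantor-type set $C[Q^{*}_s,W_n]$ in the sense of the Remark preceding Section 2.

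The main obstacle is the bookkeeping around $Q^{*}_s$-binary points. First, the convention that $f_a$ uses the $(0)$-tailed representation means a binary $x$ must be tested only with that string. The $(s-1)$-tailed strings in the product set correspond to points whose genuine value may differ, so those strings must be discarded. Only countably many strings are removed, so continuum cardinality is preserved and finiteness is unaffected. Second, in Case 1) the hypothesis should be read as holding for some $n$, and we must check that no string survives. I would handle both issues by noting that the map from strings to points is at most two-to-one and the set of exceptional strings is countable.
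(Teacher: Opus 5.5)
Your reduction of $f_a(x)=y_0$ to the digitwise conditions $\alpha_n\in W_n=A\cap\{a_n-b_n,a_n+b_n\}$, followed by counting, is the paper's argument via system (7). Cases 2) and 3) go through. In Case 3 you need, as you already assume in Case 2, that no $W_n$ is empty; otherwise hypotheses 1) and 3) can both hold. For example, $s=3$, $a=\Delta^3_{(1)}$, $y_0=\Delta^{Q^{*}_3}_{2(1)}$ satisfies both, and its level set is empty.

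The genuine gap is in Case 1, at the one step where you go beyond the paper. You claim that the other representation of a $Q^{*}_s$-binary $y_0$ cannot rescue a point; it can. Suppose $y_0=\Delta^{Q^{*}_s}_{c_1\ldots c_{m-1}c_m(0)}$ and the bad index is $n=m$, so $c_m=s-1$. The other representation $\Delta^{Q^{*}_s}_{c_1\ldots c_{m-1}[s-2](s-1)}$ has digit $s-2$ at position $m$, which is solvable when $a_m\in\{1,s-2\}$. After that it has digits $s-1$, each solvable by $\alpha_k=s-1-a_k$ whenever $a_k\in\{0,s-1\}$. So the emptiness argument applies neither at $n$ nor in the tail.

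Concretely, take $s=3$, $a=\Delta^3_{1(20)}$ and $y_0=\Delta^{Q^{*}_3}_{2(0)}$. Here $b_1=s-1$ and $a_1=1$, yet the $Q^{*}_3$-unary point $x=\Delta^{Q^{*}_3}_{0(02)}$ gives $f_a(x)=\Delta^{Q^{*}_3}_{1(2)}=\Delta^{Q^{*}_3}_{2(0)}=y_0$. Using the $(s-1)$-tailed representation of $y_0$ instead does not help. For $a=\Delta^3_{(1)}$ and $y_0=\Delta^{Q^{*}_3}_{0(2)}$ (bad index $n=2$), we have $f_a(\Delta^{Q^{*}_3}_{0(1)})=\Delta^{Q^{*}_3}_{1(0)}=y_0$.

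So Case 1 cannot be completed as you describe, because statement 1) is false for such binary $y_0$. The paper's proof misses this only because it passes from $f_a(x)=y_0$ straight to system (7) without discussing binary points. Your argument does prove 1) when $y_0$ is $Q^{*}_s$-unary. More generally, it proves 1) when \emph{every} representation of $y_0$ has an index $n$ with $A\cap\{a_n-b_n,a_n+b_n\}=\emptyset$, and that is the form in which 1) should be claimed.

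In Case 2 you also assert without reason that the second product set is finite. It is: the other representation of a binary $y_0$ ends in $(0)$ or $(s-1)$, and each such digit admits at most one solution $\alpha_n$.
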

\begin{proof}
Let a number $a = \Delta^s_{a_1 a_2 \ldots a_n \ldots}$ and the function $f_a$ generated by it be given. Consider $y_0 = \Delta^{Q^{*}_s}_{b_1 b_2 \ldots b_n \ldots}$. 
Then the level set of $y_0$ consists of the solutions of the equation
$f_a\left(\Delta^{Q^{*}_s}_{\alpha_1(x) \alpha_2(x) \ldots \alpha_n(x) \ldots} \right) = \Delta^{Q^{*}_s}_{b_1 b_2 \ldots b_n \ldots}$.
By the definition of the function, we obtain the following system of equations:
\begin{equation}
\begin{cases}
|a_1 - \alpha_1| = b_1, \\
\cdots \\
|a_n - \alpha_n| = b_n, \\
\cdots
\end{cases}
\end{equation}
Since $b_n \in A$, then the equation $|a_n - \alpha_n| = b_n$ 
is equivalent to the set of equation $a_n - \alpha_n = b_n$ and $a_n - \alpha_n = -b_n$,
i.e., $\alpha_n = a_n - b_n$  or $\alpha_n = a_n + b_n$.
If both $a_n - b_n$ and $a_n + b_n$ belong to $A$ simultaneously, then the equation 
$|a_n - \alpha_n| = b_n$ has at most two solutions.  
Moreover, if this occurs for infinitely many equations in system (7) and $b_n \neq 0$ infinitely often (in this case the equation $|a_n - \alpha_n| = b_n$ has two solutions), then the system has a continuum of solutions.

If only one of the values $\alpha_n = a_n - b_n$ or $\alpha_n = a_n + b_n$ belongs to $A$, or if $b_n = 0$ for all $n \in N$, then the equation has a unique solution.  
Moreover, if only a finite number of equations have two solutions while all others have one, then the system (7) has a finite number of solutions.  
Finally, if for the equation $|a_n - \alpha_n| = b_n$ we have $b_n = s-1$ and $a_n \neq s-1$ or $a_n \neq 0$, then the equation has no solutions, and consequently the system 
(7) also has none.

The quantity of solutions of system (7) corresponds to the quantity of preimages 
of the level $y_0$, which completes the proof of the theorem.
\end{proof}

\section{Some partial cases}

Let the $Q^{*}_s$-representation be a $Q_3$-representation with parameters $(q_0, q_1, q_2)$.

\textbf{Example 1.} If $a_n = 1$ for all $n \in N$, then:

1) the value set of the function is the self-similar Cantor-type set $C[Q_3, \{0,1\}]$ 
with fractal dimension given as the solution of the equation $q_0^x + q_1^x = 1$;

2) the level set $y_0 = \Delta^{Q_3}_{(1)}$ is the self-similar Cantor-type set 
$C[Q_3, \{0,2\}]$, whose fractal dimension is the solution of the equation
$q_0^x + q_2^x = 1$.

Note that in this case the fractal properties of all level sets of the function can be easily determined.

\end{document}